\DeclareSymbolFont{extraup}{U}{zavm}{m}{n}
\DeclareMathSymbol{\varheart}{\mathalpha}{extraup}{86}
\DeclareMathSymbol{\vardiamond}{\mathalpha}{extraup}{87}
\DeclareMathSymbol{\vardiamond}{\mathalpha}{extraup}{87}
\newcommand{\commment}[1]{}
\renewcommand{\phi}{\varphi}
\renewcommand{\emptyset}{\varnothing}
\renewcommand{\epsilon}{\varepsilon}
\theoremstyle{plain}
\newtheorem{thm}{Theorem}
\newtheorem{theorem}{Theorem}[section]
\newtheorem{lemma}[thm]{Lemma}
\theoremstyle{definition}
\newtheorem{definition}[thm]{Definition}
\title{Sahlqvist Correspondence Theory for Instantial Neighbourhood Logic}
\author{Zhiguang Zhao}
\date{}
\begin{document}
\maketitle
\begin{abstract}
\noindent

In the present paper, we investigate the Sahlqvist-type correspondence theory for instantial neighbourhood logic (INL), which can talk about existential information about the neighbourhoods of a given world and is a mixture between relational semantics and neighbourhood semantics. We have two proofs of the correspondence results, the first proof is obtained by using standard translation and minimal valuation techniques directly, the second proof follows \cite{GeNaVe05} and \cite{Ha03}, where we use bimodal translation method to reduce the correspondence problem in instantial neighbourhood logic to normal bimodal logics in classical Kripke semantics. We give some remarks and future directions at the end of the paper.

{\em Keywords:} instantial neighbourhood logic, modal logic, neighbourhood semantics, Sahlqvist correspondence theory, translation method.

{\em Math. Subject Class.} 03B45, 03B99.

\end{abstract}

\section{Introduction}

Recently \cite{BeEndG20,Tu16,vBBeEn19a,vBBeEn19b,vBBeEnYu17,Yu18,Yu20}, a variant of neighbourhood semantics for modal logics is given, under the name of instantial neighbourhood logic (INL), where existential information about the neighbourhoods of a given world can be added. This semantics is a mixture between relational semantics and neighbourhood semantics, and its expressive power is strictly stronger than neighbourhood semantics. In this semantics, the n+1-ary modality $\Box(\psi_1,\ldots,\psi_n;\phi)$ is true at a world $w$ if and only if there exists a neighbourhood $S\in N(w)$ such that $\phi$ is true everywhere in $S$, and each $\psi_i$ is true at $w_i\in S$ for some $w_i$.

Instantial neighbourhood logic is first introduced in \cite{vBBeEnYu17}, where the authors defines the notion of bisimulation for instantial neighbourhood logic, gives a complete axiomatic system, and determines its precise SAT complexity; in \cite{Tu16}, the canonical rules are defined for instantial neighbourhood logic; in \cite{vBBeEn19a}, the game-theoretic aspects of instantial neighbourhood logic is studied; in \cite{vBBeEn19b}, a propositional dynamic logic IPDL is obtained by combining instantial neighbourhood logic with propositional dynamic logic (PDL), its sound and complete axiomatic system is given as well as its finite model property and decidability; in \cite{BeEndG20}, the duality theory for instantial neighbourhood logic is developed via coalgebraic method; in \cite{Yu18}, a tableau system for instantial neighbourhood logic is given which can be used for mechanical proof and countermodel search; in \cite{Yu20}, a cut-free sequent calculus and a constructive proof of its Lyndon interpolation theorem is given. However, the Sahlqvist-type correspondence theory is still unexplored, which is the theme of this paper. 

In this paper, we define the Sahlqvist formulas in the instantial neighbourhood modal language, and give two different proofs of correspondence results. The first proof is given by standard translation and minimal valuation techniques as in \cite[Section 3.6]{BRV01}, while the second proof uses bimodal translation method in monotone modal logic and neighbourhood semantics \cite{Ha03,KrWo99,Pa85,vB03} to show that every Sahlqvist formula in the instantial neighbourhood modal language can be translated into a bimodal Sahlqvist formula in Kripke semantics, and hence has a first-order correspondent. 

The structure of the paper is as follows: in Section \ref{Sec:Prelim}, we give a brief sketch on the preliminaries of instantial neighbourhood logic, including its syntax and neighbourhood semantics. In Section \ref{Sec:Trans1}, we define the standard translation of instantial neighbourhood logic into a two-sorted first-order language. In Section \ref{Sec:Sahl}, we define Sahlqvist formulas in instantial neighbourhood logic, and prove the Sahlqvist correspondence theorem via standard translation and minimal valuation. In Section \ref{Sec:Trans2}, we discuss the translation of instantial neighbourhood logic into normal bimodal logic, and prove Sahlqvist correspondence theorem via this bimodal translation. We give some remarks and further directions in Section \ref{Sec:Concl}.

\section{Preliminaries on instantial neighbourhood logic}\label{Sec:Prelim}

In this section, we collect some preliminaries on instantial neighbourhood logic, which can be found in \cite{vBBeEnYu17}.
\paragraph{Syntax.}The formulas of instantial neighbourhood logic are defined as follows:
$$\phi::=p \mid \bot \mid \top \mid \neg\phi \mid \phi_1\land\phi_2 \mid \phi_1\lor\phi_2 \mid \Box_{n}(\phi_1, \ldots, \phi_n; \phi)$$
where $p\in\mathbf{Prop}$ is a propositional variable, $\Box_{n}$ is an $n$+1-ary modality for each $n\in\mathbb{N}$. $\rightarrow,\leftrightarrow$ can be defined in the standard way. An occurence of $p$ is said to be \emph{positive} (resp. \emph{negative}) in $\phi$ if $p$ is under the scope of an even (resp. odd) number of negations. A formula $\phi$ is positive (resp. negative) if all occurences of propositional variables in $\phi$ are positive (resp. negative).\label{page:Pos:Neg}

\paragraph{Semantics.}

For the semantics of instantial neighbourhood logic, we use neighbourhood frames to interpret the instantial neighbourhood modality, one and the same neighbourhood function for all the $n$+1-ary modalities for all $n\in\mathbb{N}$.

\begin{definition}(Neighbourhood frames and models)
A \emph{neighbourhood frame} is a pair $\mathbb{F}=(W, N)$ where $W\neq\emptyset$ is the set of worlds, $N:W\rightarrow\mathcal{P}(\mathcal{P}(W))$ is a map called a \emph{neighbourhood function}, where $\mathcal{P}(W)$ is the powerset of $W$. A \emph{valuation} on $W$ is a map $V: \mathbf{Prop} \rightarrow \mathcal{P}(W)$. A triple $\mathbb{M}=(W,N,V)$ is called a \emph{neighbourhood model} or a neighbourhood model based on $(W,N)$ if $(W,N)$ is a neighbourhood frame and $V$ is a valuation on it.
\end{definition}

The semantic clauses for the Boolean part is standard. For the instantial neighbourhood modality $\Box$, the satisfaction relation is defined as follows:
\begin{center}
$\mathbb{M},w\Vdash\Box_{n}(\phi_1, \ldots, \phi_n; \phi)$ iff there is $S\in N(w)$ such that for all $s\in S$ we have $\mathbb{M},s\Vdash\phi$ and for all $i=1,\ldots,n$ there is an $s_i\in S$ such that $\mathbb{M},s_i\Vdash\phi_i$.
\end{center}

\paragraph{Semantic properties of instantial neighbourhood modalities}\label{Page:preserv:join}It is easy to see the following lemma, which states that the $n$+1-ary instantial neighbourhood modality $\Box_{n}$ is monotone in every coordinate, and is completely additive (and hence monotone) in the first $n$ coordinates. This observation is useful in the algebraic correspondence analysis in instantial neighbourhood logic.

\begin{lemma}
\begin{enumerate}
\item For any $\mathbb{F}=(W, N)$, any $w\in W$ and any valuations $V_1,V_2:\mathsf{Prop}\rightarrow\mathcal{P}(W)$ such that $V_1(p)\subseteq V_2(p)$, $V_1(p_i)\subseteq V_2(p_i)$ for all $i=1, \ldots, n$, 
\begin{center}
if $\mathbb{F}, V_1, w\Vdash\Box_{n}(p_1, \ldots, p_n; p)$, then $\mathbb{F}, V_2, w\Vdash\Box_{n}(p_1, \ldots, p_n; p)$;
\end{center}
\item For any $\mathbb{F}=(W, N)$, any $w\in W$ and any valuation $V:\mathsf{Prop}\rightarrow\mathcal{P}(W)$, fix an $i\in \{1,\ldots, n\}$ and a $v\in W$, and define $V_{i,v}:\mathsf{Prop}\rightarrow\mathcal{P}(W)$ such that $V_{i,v}(p_j)=V(p_j)$ for $j\neq i$, and $V_{i,v}(p_i)=\{v\}$. Then the following holds:
\begin{center}
$\mathbb{F}, V, w\Vdash\Box_{n}(p_1, \ldots, p_i,\ldots, p_n; p)$ iff there exists a $v\in V(p_i)$ such that $\mathbb{F}, V_{i,v}, w\Vdash\Box_{n}(p_1, \ldots, p_i,\ldots, p_n; p)$.
\end{center}
\end{enumerate}
\end{lemma}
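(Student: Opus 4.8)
The plan is to prove both statements directly from the satisfaction clause for $\Box_n$, treating each as an unpacking of the existential-universal quantifier structure hidden in that clause. Recall that $\mathbb{F},V,w\Vdash\Box_n(p_1,\ldots,p_n;p)$ holds iff there is some $S\in N(w)$ with $S\subseteq V(p)$ (the universal part, ``for all $s\in S$, $s\Vdash p$'') and with $S\cap V(p_i)\neq\emptyset$ for each $i=1,\ldots,n$ (the existential part, ``there is $s_i\in S$ with $s_i\Vdash p_i$''). Rewriting the clause in this set-theoretic form is the first move, since it makes both monotonicity and the witness-extraction transparent.

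For the first item, I would fix a neighbourhood $S\in N(w)$ witnessing truth under $V_1$. The hypothesis gives $V_1(p)\subseteq V_2(p)$ and $V_1(p_i)\subseteq V_2(p_i)$ for each $i$. From $S\subseteq V_1(p)\subseteq V_2(p)$ the universal condition is preserved, and from $\emptyset\neq S\cap V_1(p_i)\subseteq S\cap V_2(p_i)$ the existential conditions are preserved, so the very same $S$ witnesses truth under $V_2$. This establishes monotonicity in all coordinates simultaneously; I would note that the argument is uniform in the distinguished variable $p$ and in each $p_i$, so no separate case analysis is needed.

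For the second item, the right-to-left direction is an instance of monotonicity: if $v\in V(p_i)$ and $\mathbb{F},V_{i,v},w\Vdash\Box_n(\ldots;p)$, then since $V_{i,v}(p_i)=\{v\}\subseteq V(p_i)$ and $V_{i,v}$ agrees with $V$ on all other relevant variables, item (1) applied coordinatewise yields truth under $V$. The left-to-right direction is the substantive one: given a witnessing $S\in N(w)$ under $V$, the existential condition at coordinate $i$ supplies some $s_i\in S\cap V(p_i)$; I would set $v:=s_i$. Then $v\in V(p_i)$, and the same $S$ witnesses $\mathbb{F},V_{i,v},w\Vdash\Box_n(\ldots;p)$, because the universal condition on $p$ and the existential conditions at the other coordinates $j\neq i$ are untouched by changing $V(p_i)$ to $\{v\}$, while at coordinate $i$ we now have $v\in S\cap\{v\}$, so that existential condition still holds.

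The only point requiring care — the mild obstacle — is verifying that shrinking $V(p_i)$ to the singleton $\{v\}$ does not disturb the other existential witnesses or the universal clause. This is immediate once the clause is in set-theoretic form, since those conditions depend only on $V(p)$ and on $V(p_j)$ for $j\neq i$, all of which $V_{i,v}$ preserves by definition. I expect the whole proof to be short and to hinge entirely on having rewritten the satisfaction clause as the conjunction of one inclusion $S\subseteq V(p)$ and $n$ nonemptiness conditions $S\cap V(p_i)\neq\emptyset$.
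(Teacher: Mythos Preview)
Your proposal is correct and is exactly the direct verification the paper has in mind: the paper itself gives no proof of this lemma beyond the remark that it is ``easy to see'', and your unpacking of the satisfaction clause into the inclusion $S\subseteq V(p)$ together with the nonemptiness conditions $S\cap V(p_i)\neq\emptyset$ is the intended argument. There is nothing to compare; your write-up simply fills in the details the paper omits.
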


Algebraically, if we view the $n$+1-ary modality $\Box_{n}$ as an $n$+1-ary function $\Box_{n}^{\mathbb{A}}:\mathbb{A}^{n+1}\rightarrow\mathbb{A}$, then $\Box_{n}^{\mathbb{A}}(a_1, \ldots, a_n; a)$ is completely additive (i.e.\ preserve arbitrary joins) in the first $n$ coordinate, and monotone in the last coordinate. This observation is useful in the algebraic correspondence analysis (see Section \ref{Sec:Concl}).

\section{Standard translation of instantial neighbourhood logic}\label{Sec:Trans1}

\subsection{Two-sorted first-order language $\mathcal{L}_{1}$ and standard translation}\label{Subsec:L1ST}

Given the INL language, we consider the corresponding two sorted first-order language $\mathcal{L}_1$, which is going to be interpreted in a two-sorted domain $W_{w}\times W_{s}$. It has the following ingredients:

\begin{enumerate}
\item world variables $x,y,z,\ldots$, to be interpreted as possible worlds in the world domain $W_{w}$;
\item subset variables $X,Y,Z,\ldots$, to be interpreted as objects in the subset domain $W_{s}=\{X \mid X\subseteq W_{w}\}$;\footnote{Notice that here the subset variables are treated as first-order variables in the subset domain $W_{s}$, rather than second-order variables in the world domain $W_{w}$.}
\item a binary relation symbol $R_{\ni}$, to be interpreted as the reverse membership relation $R^{\ni}\subseteq W_{s}\times W_{w}$ such that $R^{\ni}Xx$ iff $x\in X$;
\item a binary relation symbol $R_{N}$, to be interpreted as the neighbourhood relation $R^{N}\subseteq W_{w}\times W_{s}$ such that $R^{N}xX$ iff $X\in N(x)$;
\item unary predicate symbols $P_{1}$, $P_{2}$,\ldots, to be interpreted as subsets of the world domain $W_{w}$.
\end{enumerate}

We also consider the following second-order language $\mathcal{L}_{2}$ which is obtained by adding second-order quantifiers $\forall P_{1}, \forall P_{2}$,\ldots over the world domain $W_{w}$. Existential second-order quantifiers $\exists P_{1}, \exists P_{2}, \ldots$ are interpreted in the standard way. Notice that here the second-order variables $P_1$,\ldots are different from the subset variables $X,Y,Z,\ldots$, since the former are interpreted as subsets of $W_{w}$, and the latter are interpreted as objects in $W_{s}$.

Now we define the standard translation $ST_{w}(\phi)$ as follows:

\begin{definition}(Standard translation)
For any INL formula $\phi$ and any world symbol $x$, the standard translation $ST_{x}(\phi)$ of $\phi$ at $x$ is defined as follows:
\begin{itemize}
\item $ST_{x}(p):=Px$;
\item $ST_{x}(\bot):=x\neq x$;
\item $ST_{x}(\top):=x= x$;
\item $ST_{x}(\neg\phi):=\neg ST_{x}(\phi)$;
\item $ST_{x}(\phi\land\psi):=ST_{x}(\phi)\land ST_{x}(\psi)$;
\item $ST_{x}(\phi\lor\psi):=ST_{x}(\phi)\lor ST_{x}(\psi)$;
\item $ST_{x}(\Box_{n}(\phi_1, \ldots, \phi_n;\phi))=\exists X(R_{N}xX\land\forall y(R_{\ni}Xy\rightarrow ST_{y}(\phi))\land$

$\exists y_{1}(R_{\ni}Xy_{1}\land ST_{y_{1}}(\phi_{1}))\land\ldots\land\exists y_{n}(R_{\ni}Xy_{n}\land ST_{y_{n}}(\phi_{n})))$.
\end{itemize}
\end{definition}

For any neighbourhood frame $\mathbb{F}=(W,N)$, it is natural to define the following corresponding two-sorted Kripke frame $\mathbb{F}^{2}=(W, \mathcal{P}(W), R^{\ni}, R^{N})$, where 

\begin{enumerate}
\item $R^{\ni}\subseteq \mathcal{P}(W)\times W$ such that for any $x\in W$ and $X\in\mathcal{P}(W)$, $R^{\ni}Xx$ iff $x\in X$;
\item $R^{N}\subseteq W\times\mathcal{P}(W)$ such that for any $x\in W$ and $X\in\mathcal{P}(W)$, $R^{N}xX$ iff $X\in N(x)$.
\end{enumerate}

Given a two-sorted Kripke frame $\mathbb{F}^{2}=(W, \mathcal{P}(W), R^{\ni}, R^{N})$, a valuation $V$ is defined as a map $V:\mathsf{Prop}\rightarrow\mathcal{P}(W)$. Notice that here the $\mathcal{P}(W)$ in the definition of $V$ is understood as the powerset of the first domain, rather than the second domain itself.

For this standard translation, it is easy to see the following correctness result:

\begin{theorem}
For any neighbourhood frame $\mathbb{F}=(W,N)$, any valuation $V$ on $\mathbb{F}$, any $w\in W$, any INL formula $\phi$,
$$(\mathbb{F},V,w)\Vdash\phi\mbox{ iff }\mathbb{F}^{2},V\vDash ST_{x}(\phi)[w].$$
\end{theorem}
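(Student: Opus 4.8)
The plan is to prove the equivalence
$$(\mathbb{F},V,w)\Vdash\phi \mbox{ iff } \mathbb{F}^2, V \vDash ST_x(\phi)[w]$$
by induction on the structure of the INL formula $\phi$, generalized to all worlds simultaneously (i.e.\ establishing the statement for every $w\in W$ at once, so that the induction hypothesis is available at the witness worlds arising in the modal clause). The interpretation of the two-sorted language $\mathcal{L}_1$ in $\mathbb{F}^2$ must first be pinned down: a free world variable $x$ is assigned an element of $W$, a free subset variable $X$ an element of $\mathcal{P}(W)$, the predicate $P$ (for propositional variable $p$) is interpreted as $V(p)\subseteq W$, and the two relation symbols $R_\ni, R_N$ receive the interpretations $R^\ni, R^N$ specified in the construction of $\mathbb{F}^2$. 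With these conventions fixed, the claim $\mathbb{F}^2,V\vDash ST_x(\phi)[w]$ abbreviates satisfaction under the assignment sending $x$ to $w$.

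The base and Boolean cases are routine. For $\phi=p$ we have $ST_x(p)=Px$, and $\mathbb{F}^2,V\vDash Px[w]$ holds iff $w\in V(p)$ iff $(\mathbb{F},V,w)\Vdash p$. The constants $\bot,\top$ translate to $x\neq x$ and $x=x$, matching the constantly false and constantly true truth conditions. Negation, conjunction, and disjunction commute with $ST_x$ by definition, so the induction hypothesis applied to the immediate subformulas closes these cases immediately.

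The modal case is the heart of the argument. Here I would unfold both sides in parallel. By the semantic clause, $(\mathbb{F},V,w)\Vdash\Box_n(\phi_1,\ldots,\phi_n;\phi)$ holds iff there is $S\in N(w)$ with $(\mathbb{F},V,s)\Vdash\phi$ for all $s\in S$ and, for each $i$, some $s_i\in S$ with $(\mathbb{F},V,s_i)\Vdash\phi_i$. On the other side,
$$ST_x(\Box_n(\phi_1,\ldots,\phi_n;\phi)) = \exists X\bigl(R_N xX \land \forall y(R_\ni Xy\rightarrow ST_y(\phi)) \land \bigwedge_{i=1}^n \exists y_i(R_\ni Xy_i \land ST_{y_i}(\phi_i))\bigr),$$
which is satisfied at $w$ iff there is a subset $X\in\mathcal{P}(W)$ with $R^N wX$ (i.e.\ $X\in N(w)$), with $\mathbb{F}^2,V\vDash ST_y(\phi)[u]$ for every $u$ such that $R^\ni Xu$ (i.e.\ every $u\in X$), and with, for each $i$, some $u_i\in X$ where $ST_{y_i}(\phi_i)$ holds. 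The dictionary $R^N wX \Leftrightarrow X\in N(w)$ and $R^\ni Xu \Leftrightarrow u\in X$ translates the existential witness $X$ into the neighbourhood $S$ and the membership conditions into the pointwise requirements, while the induction hypothesis applied at each $u, u_i$ converts $ST_\cdot(\phi), ST_\cdot(\phi_i)$ satisfaction into the semantic satisfaction of $\phi,\phi_i$. Reading the two unfoldings side by side then yields the equivalence.

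The only genuine subtlety — and where I would concentrate the care — is the two-sorted bookkeeping: one must verify that the existentially quantified $X$ ranges over exactly $\mathcal{P}(W)=W_s$ (so that no neighbourhoods are missed and no spurious sets are added), and that the relation symbols are read in the correct direction ($R_\ni$ takes a subset argument first, $R_N$ a world argument first). Because the induction is invoked at worlds $u\in S$ other than $w$, it is essential that the inductive statement be quantified universally over the world domain rather than fixed at $w$; I would state the induction hypothesis in this strengthened form from the outset to make the modal step go through cleanly.
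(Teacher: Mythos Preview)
Your proposal is correct and is exactly the routine structural induction the paper has in mind; the paper itself omits the argument entirely, declaring the result ``easy to see.'' Your attention to the two-sorted bookkeeping and to stating the induction hypothesis for all worlds simultaneously is appropriate and suffices to close the modal clause.
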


\section{Sahlqvist correspondence theorem in instantial neighbourhood logic via standard translation}\label{Sec:Sahl}

In this section, we will define the Sahlqvist formulas in instantial neighbourhood logic and prove the correspondence theorem via standard translation and minimal valuation method. First we recall the definition of Sahlqvist formulas in normal modal logic. Then we identify the special situations where the instantial neighourhood modalities ``behave well'', i.e.\ have good quantifier patterns in the standard translation. Finally, we define INL-Sahlqvist formulas step by step in the style of \cite[Section 3.6]{BRV01}, and prove the correspondence theorem.

\subsection{Sahlqvist formulas in normal modal logic}\label{Subsec:Sahlqvist:basic}
 In this subsection we recall the syntactic definition of Sahlqvist formulas in normal modal logic (see \cite[Section 3.6]{BRV01}).

\begin{definition}(Sahlqvist formulas\footnote{Here what we call Sahlqvist formulas are called Sahlqvist implications in \cite[Section 3.6]{BRV01}.} in normal modal logic) 
A \emph{boxed atom} is a formula of the $\Box_{i_1}\ldots\Box_{i_n}p$, where $\Box_{i_1},\ldots,\Box_{i_n}$ are (not necessarily distinct) boxes. In the case where $k$=0, the boxed atom is just $p$.

A \emph{Sahlqvist antecedent} $\phi$ is a formula built up from $\bot,\top$, boxed atoms, and negative formulas, using $\land,\lor$ and existential modal operators ($\Diamond$ and $\Delta$). A \emph{Sahlqvist formula} is an implication $\phi\to\psi$ in which $\psi$ is positive and $\phi$ is a Sahlqvist antecedent.
\end{definition}

As we can see from the definition above, the Sahlqvist antecedents are built up by $\bot,\top,p,\Box_{i_1}\ldots\Box_{i_n}p$ and negative formulas using $\land,\lor,\Diamond,\Delta$. If we consider the standard translations of Sahlqvist antecedents, the inner part are translated into universal quantifiers, and the outer part are translated into existential quantifiers.

\subsection{Special cases where the instantial neighbourhood modalities become ``normal''}\label{Subsec:normal}

As is mentioned in \cite[Section 7]{vBBeEnYu17} and as we can see in the definition of the standard translation, the quantifier pattern of $\Box_{n}(\phi_1, \ldots, \phi_n;\phi)$ is similar to the case of monotone modal logic \cite{Ha03} which has an $\exists\forall$ pattern. As a result, even with two layers of INL modalities the complexity goes beyond the Sahlqvist fragment. However, we can still consider some special situations where we can reduce the modality to an $n$-ary normal diamond or a unary normal box.

\paragraph{$n$-ary normal diamond.}We first consider the case $\Box_{n}(\phi_1, \ldots, \phi_n;\phi)$ where $\phi$ is a \emph{pure formula} without any propositional variables, i.e., all propositional variables are substituted by $\bot$ or $\top$. In this case $ST_{x}(\phi)$ is a first-order formula $\alpha_{\phi}(x)$ without any unary predicate symbols $P_{1},P_{2}$\ldots. Therefore, in the shape of the standard translation of $\Box_{n}(\phi_1, \ldots, \phi_n;\phi)$, the universal quantifier $\forall y$ is not touched during the computation of minimal valuation, since there is no unary predicate symbol there. Indeed, we can consider the following equivalent form of $ST_{x}(\Box_{n}(\phi_1, \ldots, \phi_n;\phi))$:
$$ST_{x}(\Box_{n}(\phi_1, \ldots, \phi_n;\phi))=\exists X\exists y_{1}\ldots\exists y_{n}(R_{N}xX\land R_{\ni}Xy_{1} \land\ldots\land R_{\ni}Xy_{n}\land\forall y(R_{\ni}Xy\rightarrow \alpha_{\phi}(y))\land$$
$$(ST_{y_{1}}(\phi_{1})\land\ldots\land ST_{y_{n}}(\phi_{n})))$$

Now $ST_{x}(\Box_{n}(\phi_1, \ldots, \phi_n;\phi))$ is essentially in a form similar to $ST_{x}(\Diamond\psi)$ in the normal modal logic case; indeed, when we compute the minimal valuation here, $R_{N}xX\land R_{\ni}Xy_{1} \land\ldots\land R_{\ni}Xy_{n}\land \forall y(R_{\ni}Xy\rightarrow \alpha_{\phi}(y))$ can be recognized as an integrity and stay untouched during the process. 

From now onwards we can denote $\Box_{n}(\phi_1, \ldots, \phi_n;\phi)$ by $\Delta_{n,\phi}(\phi_1, \ldots, \phi_n)$ where $\phi$ is pure.

\paragraph{Unary Box.}As we can see from above, in $\Box_{n}(\phi_1, \ldots, \phi_n;\phi)$, we can replace propositional variables in $\phi$ by $\bot$ and $\top$ to obtain $n$-ary normal diamond modalities. By using the composition with negations, we can get the unary box modality, i.e.\ we can have a modality $$\nabla_{1,\phi}(\phi_1)=\neg\Delta_{1,\phi}(\neg\phi_1)=\neg\Box_{1}(\neg\phi_1;\phi).$$

Now we can consider the standard translation of $\nabla_{1,\phi}(\phi_1)$:
\begin{center}
\begin{tabular}{r c l}
$ST_{x}(\nabla_{1,\phi}(\phi_1))$ & $\leftrightarrow$ & $\neg ST_{x}(\Box_{1}(\neg\phi_1;\phi))$\\
& $\leftrightarrow$ & $\neg\exists X\exists y_{1}(R_{N}xX\land R_{\ni}Xy_{1}\land ST_{y_{1}}(\neg\phi_{1})\land \forall y(R_{\ni}Xy\rightarrow \alpha_{\phi}(y)))$\\
& $\leftrightarrow$ & $\forall X\forall y_{1}\neg(R_{N}xX\land R_{\ni}Xy_{1}\land ST_{y_{1}}(\neg\phi_{1})\land \forall y(R_{\ni}Xy\rightarrow \alpha_{\phi}(y)))$\\
& $\leftrightarrow$ & $\forall X\forall y_{1}(R_{N}xX\land R_{\ni}Xy_{1}\land \forall y(R_{\ni}Xy\rightarrow \alpha_{\phi}(y))\rightarrow ST_{y_{1}}(\phi_{1}))$,\\
\end{tabular}
\end{center}
where $\forall y(R_{\ni}Xy\rightarrow \alpha_{\phi}(y))$ does not contain unary predicate symbols $P_{1},P_{2},\ldots$. Now we can see that 
$ST_{x}(\nabla_{1,\phi}(\phi_1))$ has a form similar to $ST_{x}(\Box\psi)$ where $\Box$ is a normal unary box, by taking $R_{N}xX\land R_{\ni}Xy_{1}\land\forall y(R_{\ni}Xy\rightarrow \alpha_{\phi}(y))$ as an integrity.

\subsection{The definition of INL-Sahlqvist formulas in instantial neighbourhood logic}\label{Subsec:Sahl:Method1}

Now we can define the INL-Sahlqvist formulas in instantial neighbourhood logic step by step in the style of \cite[Section 3.6]{BRV01}.

\subsubsection{Very simple INL-Sahlqvist formulas}\label{SubSub:VSSF}

\begin{definition}[Very simple INL-Sahlqvist formulas]\label{Def:VSSF}
A \emph{very simple INL-Sahlqvist antecedent} $\phi$ is defined as follows:
$$\phi::=p\mid \bot\mid \top\mid  \phi\land\phi \mid \Delta_{n,\theta}(\phi_1, \ldots, \phi_n)\mid \Box_{n}(\phi_1, \ldots, \phi_n;p)$$

where $p\in\mathsf{Prop}$ is a propositional variable, $\theta$ is a pure formula without propositional variables. A \emph{very simple INL-Sahlqvist formula} is an implication $\phi\to\psi$ where $\psi$ is positive (see page \pageref{page:Pos:Neg}), and $\phi$ is a very simple INL-Sahlqvist antecedent.
\end{definition}

For very simple INL-Sahlqvist formulas, we allow $n$-ary normal diamonds $\Delta_{n,\theta}$ in the construction of $\phi$, while for the $n$+1-ary modality $\Box_{n}$, we only allow propositional variables to occur in the $n$+1-th coordinate.

We can show that very simple INL-Sahlqvist formulas have first-order correspondents:

\begin{theorem}\label{Thm:VSSF}
For any given very simple INL-Sahlqvist formula $\phi\to\psi$, there is a two-sorted first-order local correspondent $\alpha(x)$ such that for any neighbourhood frame $\mathbb{F}=(W,N)$, any $w\in W$, 
$$\mathbb{F},w\Vdash \phi\to\psi \mbox{ iff } \mathbb{F}^{2}\vDash\alpha(x)[w].$$
\end{theorem}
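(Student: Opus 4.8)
The plan is to follow the standard-translation-with-minimal-valuation strategy of \cite[Section 3.6]{BRV01}, adapted to the two-sorted setting. By the correctness theorem, the frame condition $\mathbb{F},w\Vdash\phi\to\psi$ is equivalent to the validity on $\mathbb{F}^2$ of the universal monadic second-order sentence $\forall P_1\ldots\forall P_m\,\forall x(ST_x(\phi)\to ST_x(\psi))$, where $P_1,\ldots,P_m$ are the unary predicates corresponding to the propositional variables occurring in $\phi\to\psi$. The goal is to eliminate the second-order quantifiers $\forall P_i$ by exhibiting, for each variable, a \emph{minimal valuation} definable in $\mathcal{L}_1$, and to show that the resulting sentence is equivalent to a pure first-order formula $\alpha(x)$ with no unary predicate symbols.

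First I would analyze the shape of $ST_x(\phi)$ for a very simple INL-Sahlqvist antecedent. Pushing all existential quantifiers outward (prenexing the positive existential structure coming from $\land$, from the $\Delta_{n,\theta}$ modalities, and from the $\Box_n(\cdots;p)$ modalities), I would argue that $ST_x(\phi)$ is equivalent to a formula of the form $\exists \bar{X}\,\exists \bar{y}\,(\mathrm{REL}(x,\bar{X},\bar{y})\land \mathrm{AT}(\bar{y}))$, where $\mathrm{REL}$ is a conjunction of relational atoms in $R_N$ and $R_\ni$ together with the pure ``integrity'' subformulas $\forall y(R_\ni Xy\to\alpha_\theta(y))$ contributed by the $\Delta_{n,\theta}$ and by the fixed pure $\theta$, and $\mathrm{AT}$ is a conjunction of atomic statements of the form $P_iy_j$ coming from the propositional-variable leaves. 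The crucial structural points, both guaranteed by the definition of very simple INL-Sahlqvist antecedent, are that no $P_i$ ever occurs inside the universally quantified ``everywhere'' clause (because $\theta$ is pure and $\Box_n$ allows only a bare variable $p$ in its last coordinate, handled via the reformulation on page giving the $\Delta$-style integrity) and that every occurrence of each $P_i$ in $\mathrm{AT}$ is positive and is applied to an existentially quantified world variable. Consequently the antecedent imposes only lower bounds on the $P_i$.

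Next I would compute the minimal valuation. Once the existential block $\exists\bar{X}\,\exists\bar{y}$ is instantiated, $\mathrm{AT}$ requires each $P_i$ to contain a specific finite set of witnesses $y_j$; the minimal valuation sets $P_i := \{u : u=y_{j_1}\lor\cdots\lor u=y_{j_k}\}$, i.e.\ the union of the singletons of those $y_j$ at which $P_i$ is asserted. This is first-order definable (a disjunction of equalities), and by the positivity of $\psi$ (hence monotonicity of $ST_x(\psi)$ in each $P_i$, via the monotonicity Lemma on page) it suffices to verify the implication under this smallest valuation: if the implication holds for the minimal valuation it holds for every larger one, and the minimal valuation is forced whenever the antecedent is satisfiable. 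I would then substitute these definable $P_i$ into $ST_x(\psi)$ and simplify each atom $P_iu$ to the corresponding disjunction of equalities, yielding a pure first-order formula; pulling the instantiating existential quantifiers back out and conjoining with $\mathrm{REL}$ produces the desired local correspondent $\alpha(x)$, free of unary predicate symbols.

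The main obstacle I anticipate is bookkeeping the interaction between the existential witnesses and the universal integrity clauses when several modalities are nested and conjoined: one must ensure that prenexing the existential quantifiers out of $ST_x(\phi)$ is sound (renaming bound variables to avoid capture) and, more delicately, that the universally quantified integrity subformulas $\forall y(R_\ni Xy\to\alpha_\theta(y))$ genuinely survive untouched by the minimal-valuation substitution precisely because they contain no $P_i$. The correctness of this ``the universal part is an integrity'' move is exactly what the analysis of Section~\ref{Subsec:normal} was set up to justify, so I would lean on that reformulation to guarantee that the minimal valuation computed from $\mathrm{AT}$ alone is correct and that no $P_i$ hides inside a universal quantifier where a mere lower bound would be insufficient. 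Verifying this absence-of-$P_i$-under-$\forall$ invariant by induction on the structure of the very simple INL-Sahlqvist antecedent is the technical heart of the argument.
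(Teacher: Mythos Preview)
There is a genuine gap in your treatment of the clause $\Box_{n}(\phi_1,\ldots,\phi_n;p)$. You assert that ``no $P_i$ ever occurs inside the universally quantified `everywhere' clause'' and that the $\Box_n(\cdots;p)$ case is ``handled via the reformulation \ldots giving the $\Delta$-style integrity.'' This is incorrect: the integrity reformulation of Section~\ref{Subsec:normal} applies only to $\Delta_{n,\theta}$, i.e.\ when the last coordinate is a \emph{pure} formula $\theta$, so that $\forall y(R_\ni Xy\to\alpha_\theta(y))$ contains no unary predicate symbols. When the last coordinate is a propositional variable $p$, the standard translation of $\Box_n(\phi_1,\ldots,\phi_n;p)$ contains the conjunct $\forall y(R_\ni Xy\to Py)$, in which $P$ \emph{does} occur under a universal quantifier. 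Thus your $\mathrm{AT}$ is not merely a conjunction of atoms $P_i y_j$; it also contains conjuncts of the form $\forall y(R_\ni Xy\to P_iy)$.

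Consequently your minimal valuation is wrong. Setting $P_i$ to a finite union of singletons $\{y_{j_1},\ldots,y_{j_k}\}$ will in general fail to satisfy $\forall y(R_\ni Xy\to P_iy)$, since the neighbourhood $X$ can be infinite. The correct minimal valuation, as the paper computes, is
\[
\sigma(P)\;=\;\lambda u.\;\bigl(u=x_1\lor\cdots\lor u=x_n\lor R_\ni X_1 u\lor\cdots\lor R_\ni X_m u\bigr),
\]
where the disjuncts $R_\ni X_j u$ come precisely from the constraints $\forall y(R_\ni X_j y\to Py)$. With this correction the rest of your outline (prenexing the existential block, moving $\exists$ across the implication, substituting the minimal valuation into the positive consequent) matches the paper's argument; but the shape of the minimal valuation---and the recognition that $\Box_n(\ldots;p)$ behaves like $\Delta(\Diamond\phi_1\land\cdots\land\Diamond\phi_n\land\Box p)$ rather than like a pure $\Delta_{n,\theta}$---is the technical point you are missing.
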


\begin{proof}
The proof strategy is similar to \cite[Theorem 3.42, Theorem 3.49]{BRV01}, with some differences in treating $\Box_{n}(\phi_1, \ldots, \phi_n;p)$.

We first start with the two-sorted second-order translation of $\phi\to\psi$, namely $\forall P_1\ \ldots \forall P_n\forall x(ST_x(\phi)\to ST_x(\psi))$, where $ST_x(\phi),ST_x(\psi)$ are the two-sorted first-order standard translations of $\phi,\psi$.

For any very simple INL-Sahlqvist antecedent $\phi$, we consider the shape of $\beta=ST_{x}(\phi)$ defined inductively,

$$\beta::=Px\mid x\neq x\mid x=x\mid \beta\land\beta \mid $$
$$\exists X\exists y_{1}\ldots\exists y_{n}(R_{N}xX\land R_{\ni}Xy_{1} \land\ldots\land R_{\ni}Xy_{n}\land \forall y(R_{\ni}Xy\rightarrow \alpha_{\theta}(y))\land$$
$$ST_{y_{1}}(\phi_{1})\land\ldots\land ST_{y_{n}}(\phi_{n})) \mid $$
$$\exists X\exists y_{1}\ldots\exists y_{n}(R_{N}xX\land R_{\ni}Xy_{1} \land\ldots\land R_{\ni}Xy_{n}\land$$
$$\forall y(R_{\ni}Xy\rightarrow Py)\land ST_{y_{1}}(\phi_{1})\land\ldots\land ST_{y_{n}}(\phi_{n}))$$

Now we can denote $R_{N}xX\land R_{\ni}Xy_{1} \land\ldots\land R_{\ni}Xy_{n}$ as $R_{n}Xxy_{1}\ldots y_{n}$, and thus get 
$$\beta::=Px\mid x\neq x\mid x=x\mid \beta\land\beta \mid $$
$$\exists X\exists y_{1}\ldots\exists y_{n}(R_{n}Xxy_{1}\ldots y_{n}\land \forall y(R_{\ni}Xy\rightarrow \alpha_{\theta}(y))\land ST_{y_{1}}(\phi_{1})\land\ldots\land ST_{y_{n}}(\phi_{n})) \mid $$
$$\exists X\exists y_{1}\ldots\exists y_{n}(R_{n}Xxy_{1}\ldots y_{n}\land \forall y(R_{\ni}Xy\rightarrow Py)\land ST_{y_{1}}(\phi_{1})\land\ldots\land ST_{y_{n}}(\phi_{n}))$$

By using the equivalences
$$\exists y\delta(y)\land\gamma\leftrightarrow\exists y(\delta(y)\land\gamma)\mbox{ (where }y\mbox{ does not occur in }\gamma)$$ 
and 
$$\exists X\delta(X)\land\gamma\leftrightarrow\exists X(\delta(X)\land\gamma)\mbox{ (where }X\mbox{ does not occur in }\gamma),$$

it is easy to see that the two-sorted first-order formula $\beta=ST_{x}(\phi)$ is equivalent to a formula of the form $\exists\overline{X}\exists\overline{y}(\mbox{REL}^{\overline{\theta},\overline{X},x,\overline{y}}\land\mbox{ATProp})$, where:

\begin{itemize}
\item $\mbox{REL}^{\overline{\theta},\overline{X},x,\overline{y}}$ is a (possibly empty) conjunction of formulas of the form $R_{n}Xxy_{1}\ldots y_{n}$ or $\forall y(R_{\ni}Xy\rightarrow \alpha_{\theta}(y))$;
\item $\mbox{ATProp}$ is a conjunction of formulas of the form $\forall y(R_{\ni}Xy\rightarrow Py)$ or $Pw$ or $w=w$ or $w\neq w$.
\end{itemize}

Therefore, by using the equivalences
$$(\exists y\delta(y)\to\gamma)\leftrightarrow\forall y(\delta(y)\to\gamma)\mbox{ (where }y\mbox{ does not occur in }\gamma)$$
and 
$$(\exists X\delta(X)\to\gamma)\leftrightarrow\forall X(\delta(X)\to\gamma)\mbox{ (where }X\mbox{ does not occur in }\gamma),$$
it is immediate that $\forall P_1\ \ldots \forall P_n\forall x(ST_x(\phi)\to ST_x(\psi))$ is equivalent to 
$$\forall P_1\ \ldots \forall P_n\forall\overline{X}\forall x\forall \overline{y}(\mbox{REL}^{\overline{\theta},\overline{X},x,\overline{y}}\land\mbox{ATProp}\to\mbox{POS}), \footnote{Notice that the quantifiers $\forall P_1\ \ldots \forall P_n$ are second-order quantifiers over the world domain, and $\forall\overline{X}$ are first-order quantifiers over the subset domain.}$$where $\mbox{REL}^{\overline{\theta},\overline{X},x,\overline{y}}$ and $\mbox{ATProp}$
are given as above, and $\mbox{POS}$ is the standard translation $ST_x(\psi)$.

Now we can use similar strategy as in \cite[Theorem 3.42, Theorem 3.49]{BRV01}. To make it easier for later parts in the paper, we still mention how the minimal valuation and the resulting first-order correspondent formula look like. Without loss of generality we suppose that for any unary predicate $P$ that occurs in the $\mbox{POS}$ also occurs in $\mbox{AT}$; otherwise we can substitute $P$ by $\lambda u.u\neq u$ for $P$ to eliminate $P$. 

Now consider a unary predicate symbol $P$ occuring in $\mbox{ATProp}$, and $Px_1,\ldots, Px_n$, $\forall y(R_{\ni}X_{1}y\rightarrow Py)$, \ldots, $\forall y(R_{\ni}X_{m}y\rightarrow Py)$ are all occurences of $P$ in $\mbox{ATProp}$. By taking $\sigma(P)$ to be $$\lambda u. u=x_1\lor\ldots\lor u=x_n\lor R_{\ni}X_{1}u\lor\ldots\lor R_{\ni}X_{m}u,$$ we get the minimal valuation. The resulting first-order correspondent formula is 
$$\forall\overline{X}\forall x\forall\overline{y}(\mbox{REL}^{\overline{\theta},\overline{X},x,\overline{y}}\to[\sigma(P_1)/P_1, \ldots, \sigma(P_k)/P_k]\mbox{POS}).$$
\end{proof}

From the proof above, we can see that the part corresponding to $\Delta_{n,\theta}(\phi_1, \ldots, \phi_n)$ is essentially treated in the same way as an $n$-ary diamond in the normal modal logic setting, and $\Box_{n}(\phi_1, \ldots, \phi_n;p)$ is treated as $\Delta(\Diamond\phi_{1}\land\ldots\land\Diamond\phi_{n}\land\Box p)$ where $\Delta$ is an $n$+1-ary normal diamond, $\Diamond$ is a unary normal diamond and $\Box$ is a unary normal box, therefore we can guarantee the compositional structure of quantifiers in the antecedent to be $\exists\forall$ as a whole.

\subsubsection{Simple INL-Sahlqvist formulas}\label{SubSub:SSF}
Similar to simple Sahlqvist formulas in basic modal logic, here we can define simple INL-Sahlqvist formulas:

\begin{definition}[Simple INL-Sahlqvist formulas]\label{Def:SSF}
A \emph{pseudo-boxed atom} $\zeta$ is defined as follows: 
$$\zeta::=p \mid \bot \mid \top \mid \zeta\land\zeta \mid \nabla_{1,\theta}(\zeta)\label{page:pseu:boxed:atom}$$
where $\theta$ is a pure formula without propositional variables. Based on this, a \emph{simple INL-Sahlqvist antecedent} $\phi$ is defined as follows:
$$\phi::=\zeta\mid \bot\mid \top\mid  \phi\land\phi \mid \Delta_{n,\theta}(\phi_1, \ldots, \phi_n)\mid \Box_{n}(\phi_1, \ldots, \phi_n;\zeta)$$

where $\theta$ is a pure formula without propositional variables and $\zeta$ is a pseudo-boxed atom. A \emph{simple INL-Sahlqvist formula} is an implication $\phi\to\psi$ where $\psi$ is positive, and $\phi$ is a simple INL-Sahlqvist antecedent.
\end{definition}

\begin{theorem}\label{Thm:SSF}
For any given simple INL-Sahlqvist formula $\phi\to\psi$, there is a two-sorted first-order local correspondent $\alpha(x)$ such that for any neighbourhood frame $\mathbb{F}=(W,N)$, any $w\in W$, 
$$\mathbb{F},w\Vdash \phi\to\psi \mbox{ iff } \mathbb{F}^{2}\vDash\alpha(x)[w].$$
\end{theorem}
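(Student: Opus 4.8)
The plan is to run the same pipeline as in the proof of Theorem~\ref{Thm:VSSF}: begin with the two-sorted second-order translation $\forall P_1\ldots\forall P_n\forall x(ST_x(\phi)\to ST_x(\psi))$, bring the antecedent $ST_x(\phi)$ into a prenex $\exists\overline{X}\exists\overline{y}$ form, convert the outer implication into universal quantifiers, and then compute a minimal valuation. The only new feature of simple INL-Sahlqvist antecedents over very simple ones is the pseudo-boxed atom $\zeta$, which may now occur in place of a bare propositional variable and inside the last coordinate of $\Box_n$; hence essentially all the new work is an analysis of $ST_u(\zeta)$.

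First I would establish, by induction on the pseudo-boxed atom $\zeta$, that $ST_u(\zeta)$ is equivalent to a conjunction of formulas of \emph{boxed-atom shape} $\forall\overline{X}\forall\overline{y}(\mathrm{REL}^{\overline{\theta}}_{\mathrm{box}}\to Pv)$ (together with possibly trivial conjuncts $u=u$, $u\neq u$), where $\mathrm{REL}^{\overline{\theta}}_{\mathrm{box}}$ is a conjunction of relational atoms $R_N$, $R_{\ni}$ and pure conditions $\forall y(R_{\ni}Xy\to\alpha_\theta(y))$ carrying no unary predicate symbol, $\overline{X},\overline{y}$ are the bound subset and world variables, and $P$ is the unary predicate of the propositional variable at the corresponding leaf of $\zeta$. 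The base cases $p,\bot,\top$ give $Pu,u\neq u,u=u$, the conjunction case is immediate, and for $\zeta=\nabla_{1,\theta}(\zeta')$ the equivalence for $ST_u(\nabla_{1,\theta}(\zeta'))$ computed in Section~\ref{Subsec:normal} prefixes one block $\forall X\forall y_1$ and the conjuncts $R_N uX\land R_{\ni}Xy_1\land\forall y(R_{\ni}Xy\to\alpha_\theta(y))$ to every boxed-atom conjunct of $ST_{y_1}(\zeta')$. Because $\theta$ is pure, the added conjuncts are predicate-free, so boxed-atom shape is preserved.

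With this lemma the shape analysis of $\beta=ST_x(\phi)$ extends the one in Theorem~\ref{Thm:VSSF}: the clauses $p,\bot,\top,\phi\land\phi,\Delta_{n,\theta}(\phi_1,\ldots,\phi_n)$ are handled verbatim, while for $\Box_n(\phi_1,\ldots,\phi_n;\zeta)$ the conjunct $\forall y(R_{\ni}Xy\to ST_y(\zeta))$ replaces the earlier $\forall y(R_{\ni}Xy\to Py)$; distributing $\forall y(R_{\ni}Xy\to-)$ over the conjunction $ST_y(\zeta)$ and merging quantifier blocks yields again a conjunction of boxed-atom shaped formulas, now carrying the extra atom $R_{\ni}Xy$. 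Pulling existentials out as before, $ST_x(\phi)$ becomes equivalent to $\exists\overline{X}\exists\overline{y}(\mathrm{REL}^{\overline{\theta},\overline{X},x,\overline{y}}\land\mathrm{ATProp})$, where $\mathrm{ATProp}$ is a conjunction of boxed-atom shaped formulas together with atoms $Pw$, $w=w$, $w\neq w$; turning the implication into universal form gives $\forall P_1\ldots\forall P_n\forall\overline{X}\forall x\forall\overline{y}(\mathrm{REL}^{\overline{\theta},\overline{X},x,\overline{y}}\land\mathrm{ATProp}\to\mathrm{POS})$, with $\mathrm{POS}=ST_x(\psi)$. For each unary predicate $P$, listing its occurrences in $\mathrm{ATProp}$ as simple atoms $Px_1,\ldots,Px_k$ and boxed conjuncts $\forall\overline{Z}_j\forall\overline{z}_j(\mathrm{REL}_j\to Pv_j)$ for $j=1,\ldots,m$, I would take the minimal valuation $\sigma(P)=\lambda u.\,(u=x_1\lor\cdots\lor u=x_k\lor\bigvee_{j=1}^m\exists\overline{Z}_j\exists\overline{z}_j(\mathrm{REL}_j\land u=v_j))$, substitute it into $\mathrm{POS}$ and drop the second-order quantifiers, obtaining $\forall\overline{X}\forall x\forall\overline{y}(\mathrm{REL}^{\overline{\theta},\overline{X},x,\overline{y}}\to[\sigma(\overline{P})/\overline{P}]\mathrm{POS})$; this specialises to the formula of Theorem~\ref{Thm:VSSF}, since a $\Box_n(\ldots;p)$ occurrence contributes the disjunct $R_{\ni}Xu$.

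The hard part will be the correctness of this minimal valuation once the boxed conjuncts contain subset-sort quantifiers $\overline{Z}_j$ and the pure conditions $\alpha_\theta$. I would check the two usual halves: that $\sigma$ satisfies the antecedent (each $Px_i$ holds since $x_i\in\sigma(P)$, and each boxed conjunct holds because any $\overline{Z}_j,\overline{z}_j$ verifying $\mathrm{REL}_j$ witnesses $v_j\in\sigma(P)$), and that $\sigma$ is pointwise least among antecedent-satisfying assignments, so that since $\mathrm{POS}$ is positive and hence monotone in the $P_i$, a refutation by any valuation descends to a refutation by $\sigma$. The decisive point, exactly as in \cite[Theorem~3.49]{BRV01}, is that $\mathrm{REL}^{\overline{\theta},\overline{X},x,\overline{y}}$ and every $\mathrm{REL}_j$ are free of unary predicate symbols; in particular the $\alpha_\theta$ blocks are fixed first-order conditions depending on the frame alone, so they act as inert relational scaffolding and the computation of $\sigma(P)$ for distinct $P$ proceeds independently and unaffected by them. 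The two-sorted correctness theorem then yields the equivalence $\mathbb{F},w\Vdash\phi\to\psi$ iff $\mathbb{F}^2\vDash\alpha(x)[w]$.
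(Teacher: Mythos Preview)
Your proposal is correct and follows essentially the same pipeline as the paper: analyse $ST_u(\zeta)$ to show each pseudo-boxed atom reduces to a conjunction of ``boxed-atom shaped'' formulas with predicate-free relational prefixes, feed these into the prenex form of $ST_x(\phi)$, and compute minimal valuations as disjunctions over all occurrences. The only presentational difference is that the paper packages the relational prefixes into abbreviated binary relation symbols $R_{-2,\theta}$ and $R_{\overline{\theta}}$ (so that pseudo-boxed atoms look literally like $\forall y(R_{\overline{\theta}}xy\to\mathrm{AT}(y))$, matching the normal modal logic template), whereas you keep the full $\forall\overline{Z}_j\forall\overline{z}_j$ quantifier blocks explicit and correspondingly write the minimal valuation disjuncts as $\exists\overline{Z}_j\exists\overline{z}_j(\mathrm{REL}_j\land u=v_j)$; unfolding the paper's abbreviations gives exactly your formulas.
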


\begin{proof}
We use similar proof strategy as \cite[Theorem 3.49]{BRV01}. The part that we needs to take care of is the way to compute the minimal valuation. Now without loss of generality (by renaming quantified variables) we have the following Backus-Naur form of $\beta=ST_{x}(\zeta)$ defined inductively for any pseudo-boxed atom $\zeta$:
$$\beta::=Px\mid x\neq x\mid x=x\mid \beta\land\beta \mid \forall X\forall y_{1}(R_{N}xX\land R_{\ni}Xy_{1}\land \forall y(R_{\ni}Xy\rightarrow \alpha_{\theta}(y))\rightarrow ST_{y_{1}}(\zeta)).$$

The Backus-Naur form of $\beta=ST_{x}(\phi)$ is defined inductively for any simple Sahlqvist antecedent $\phi$:
$$\beta::=ST_{x}(\zeta)\mid x\neq x\mid x=x\mid \beta\land\beta \mid$$
$$\exists X\exists y_{1}\ldots\exists y_{n}(R_{N}xX\land R_{\ni}Xy_{1} \land\ldots\land R_{\ni}Xy_{n}\land \forall y(R_{\ni}Xy\rightarrow \alpha_{\theta}(y))\land ST_{y_{1}}(\phi_{1})\land\ldots\land ST_{y_{n}}(\phi_{n})) \mid $$
$$\exists X\exists y_{1}\ldots\exists y_{n}(R_{N}xX\land R_{\ni}Xy_{1} \land\ldots\land R_{\ni}Xy_{n}\land \forall y(R_{\ni}Xy\rightarrow ST_{y}(\zeta))\land ST_{y_{1}}(\phi_{1})\land\ldots\land ST_{y_{n}}(\phi_{n}))$$

Now we can denote $R_{N}xX\land R_{\ni}Xy_{1} \land\ldots\land R_{\ni}Xy_{n}$ as $R_{n}Xxy_{1}\ldots y_{n}$ and $R_{-1,\theta}X$ for $\forall y(R_{\ni}Xy\rightarrow \alpha_{\theta}(y))$ (note that the only possible free variable in $\alpha_{\theta}(y)$ is $y$), then the Backus-Naur form of $\beta=ST_{x}(\zeta)$ and $\beta=ST_{x}(\phi)$ can be given as follows:

$$\beta::=Px\mid x\neq x\mid x=x\mid \beta\land\beta \mid \forall y_{1}(\exists X(R_{1}Xxy_{1}\land R_{-1,\theta}X)\rightarrow ST_{y_{1}}(\zeta)),$$

$$\beta::=ST_{x}(\zeta)\mid x\neq x\mid x=x\mid \beta\land\beta \mid$$
$$\exists X\exists y_{1}\ldots\exists y_{n}(R_{n}Xxy_{1}\ldots y_{n}\land R_{-1,\theta}X\land ST_{y_{1}}(\phi_{1})\land\ldots\land ST_{y_{n}}(\phi_{n})) \mid $$
$$\exists X\exists y_{1}\ldots\exists y_{n}(R_{n}Xxy_{1}\ldots y_{n}\land \forall y(R_{\ni}Xy\rightarrow ST_{y}(\zeta))\land ST_{y_{1}}(\phi_{1})\land\ldots\land ST_{y_{n}}(\phi_{n})).$$

Now we denote $\exists X(R_{1}Xxy_{1}\land R_{-1,\theta}X)$ as $R_{-2,\theta}xy_{1}$, and we get the Backus-Naur form of pseudo-boxed atom $\beta=ST_{x}(\zeta)$ as follows:

$$\beta::=Px\mid x\neq x\mid x=x\mid \beta\land\beta \mid \forall y_{1}(R_{-2,\theta}xy_{1}\rightarrow ST_{y_{1}}(\zeta)),$$

Now using the following equivalences:

\begin{itemize}
\item $(\phi\to\forall z(\psi(z)\to\gamma))\leftrightarrow\forall z(\phi\land\psi(z)\to\gamma)$ (where $z$ does not occur in $\phi$);
\item $(\phi\to(\psi\to\gamma))\leftrightarrow(\phi\land\psi\to\gamma)$;
\item $(\phi\to(\psi\land\gamma))\leftrightarrow((\phi\to\psi)\land(\phi\to\gamma))$;
\item $\forall z(\psi (z)\land\gamma(z))\leftrightarrow(\forall z\psi (z)\land\forall z\gamma(z))$;
\end{itemize}

for any pseudo-boxed atom $\zeta$, the first-order formula $ST_{x}(\zeta)$ is equivalent to a conjunction of two-sorted first-order formulas of the form $\forall \overline{y}(\mbox{REL}^{\overline{\theta},x,\overline{y}}\to\mbox{AT})$, where:

\begin{itemize}
\item $\mbox{REL}^{\overline{\theta},x,\overline{y}}$ is a (possibly empty) conjunction of formulas of the form $R_{-2,\theta}yz$;
\item $\mbox{AT}$ is a formula of the form $Pw$ or $w=w$ or $w\neq w$ where $w$ is bounded by $\forall\overline{y}$ (here we do not need to take the conjunction because of $\forall z(\psi (z)\land\gamma(z))\leftrightarrow(\forall z\psi (z)\land\forall z\gamma(z))$).
\end{itemize}

It is easy to see that $\mbox{REL}^{\overline{\theta},x,\overline{y}}$ does not contain any unary predicate symbol $P_{i}$. By the equivalence $(\exists x\phi(x)\to\psi)\leftrightarrow\forall x(\phi(x)\to\psi)$ where $\psi$ does not contain $x$, we can transform $\forall \overline{y}(\mbox{REL}^{\overline{\theta},x,\overline{y}}\to\mbox{AT})$ into $\forall y(\exists\overline{y}'\mbox{REL}^{\overline{\theta},x,\overline{y}}\to\mbox{AT}(y))$, where $\mbox{AT}(y)$ is $Py$ or $y=y$ or $y\neq y$.

We can introduce a new binary relation symbol $R_{\overline{\theta}}xy$ which is $\exists\overline{y}'\mbox{REL}^{\overline{\theta},x,\overline{y}}$. Then $\beta=ST_{x}(\zeta)$ is a conjunction of formulas of the form $\forall y(R_{\overline{\theta}}xy\to\mbox{AT}(y))$.

Now we somehow come back to the situation of the basic normal modal logic case, where $R_{\overline{\theta}}$ is a real relation symbol. The Backus-Naur form of $\beta=ST_{x}(\phi)$ for simple INL-Sahlqvist antecedent $\phi$ can be recursively defined as follows:

$$\beta::=\forall y(R_{\overline{\theta}}xy\to\mbox{AT}(y)) \mid x\neq x\mid x=x\mid Px \mid \beta\land\beta \mid $$
$$\exists X\exists y_{1}\ldots\exists y_{n}(R_{n}Xxy_{1}\ldots y_{n}\land R_{-1,\theta}X \land ST_{y_{1}}(\phi_{1})\land\ldots\land ST_{y_{n}}(\phi_{n})) \mid $$
$$\exists X\exists y_{1}\ldots\exists y_{n}(R_{n}Xxy_{1}\ldots y_{n}\land\forall y(R_{\ni}Xy\rightarrow ST_{y}(\zeta))\land ST_{y_{1}}(\phi_{1})\land\ldots\land ST_{y_{n}}(\phi_{n}))$$

since $ST_{y}(\zeta)$ is a conjunction of formulas of the form $\forall z(R_{\overline{\theta}}yz\to\mbox{AT}(z))$, we have

\begin{center}
\begin{tabular}{r c l}
$\forall y(R_{\ni}Xy\rightarrow ST_{y}(\zeta))$ & $\leftrightarrow$ & $\forall y(R_{\ni}Xy\rightarrow\bigwedge_{i}\forall z_{i}(R_{\overline{\theta}_{i}}yz_{i}\to\mbox{AT}(z_{i})))$\\
 & $\leftrightarrow$ & $\bigwedge_{i}\forall y(R_{\ni}Xy\rightarrow\forall z_{i}(R_{\overline{\theta}_{i}}yz_{i}\to\mbox{AT}(z_{i})))$\\
 & $\leftrightarrow$ & $\bigwedge_{i}\forall y\forall z_{i}(R_{\ni}Xy\rightarrow(R_{\overline{\theta}_{i}}yz_{i}\to\mbox{AT}(z_{i})))$\\
 & $\leftrightarrow$ & $\bigwedge_{i}\forall z_{i}(\exists y(R_{\ni}Xy\land R_{\overline{\theta}_{i}}yz_{i})\to\mbox{AT}(z_{i})))$.\\
\end{tabular}
\end{center}

Now the situation is similar to the very simple INL-Sahlqvist formula case. We can see how the minimal valuation is computed:

\begin{itemize}
\item for the $\forall y(R_{\overline{\theta}}xy\to\mbox{AT}(y))$ part, when $\mbox{AT}(y)$ is $Py$, its corresponding minimal valuation is $\lambda u.R_{\overline{\theta}}xu$; when $\mbox{AT}(y)$ is $y=y$ or $y\neq y$, we can replace $\mbox{AT}(y)$ by $\top$ or $\bot$, respectively;
\item for the $x\neq x$ part, it is equivalent to $\bot$;
\item for the $x=x$ part,  it is equivalent to $\top$;
\item for the $Px$ part, its corresponding minimal valuation is $\lambda u.x=u$;
\item for the $\forall z_{i}(\exists y(R_{\ni}Xy\land R_{\overline{\theta}_{i}}yz_{i})\to\mbox{AT}(z_{i}))$ part, when $\mbox{AT}(z_{i})$ is $Pz_{i}$, its corresponding minimal valuation is $\lambda u.\exists y(R_{\ni}Xy\land R_{\overline{\theta}_{i}}yu)$; when $\mbox{AT}(y)$ is $y=y$ or $y\neq y$, we can replace $\mbox{AT}(y)$ by $\top$ or $\bot$, respectively.
\end{itemize}

Now for each propositional variable $p_i$, we take the minimal valuation to be the disjunction of all the corresponding minimal valuations where the branch has an occurence of $P_i$. By essentially the same argument as in \cite[Theorem 3.49]{BRV01}, we get the first-order correspondent of $\phi\to\psi$.
\end{proof}

\subsubsection{INL-Sahlqvist formulas}\label{SubSub:SF}
In the present section, we add negated formulas and disjunctions in the antecedent part, which is analogous to \cite[Definition 3.51]{BRV01}.

\begin{definition}[INL-Sahlqvist formulas]\label{Def:SF}
An \emph{INL-Sahlqvist antecedent} $\phi$ is defined as follows:
$$\phi::=\zeta\mid \gamma\mid\bot\mid \top\mid \phi\land\phi \mid\phi\lor\phi \mid \Delta_{n,\theta}(\phi_1, \ldots, \phi_n)\mid \Box_{n}(\phi_1, \ldots, \phi_n;\zeta)\mid \Box_{n}(\phi_1, \ldots, \phi_n;\gamma)$$
where $\theta$ is a pure formula without propositional variables, $\zeta$ is a pseudo-boxed atom defined on page \pageref{page:pseu:boxed:atom} and $\gamma$ is a negative formula defined on page \pageref{page:Pos:Neg}. An \emph{INL-Sahlqvist formula} is an implication $\phi\to\psi$ where $\psi$ is positive, and $\phi$ is an INL-Sahlqvist antecedent.
\end{definition}

\begin{theorem}\label{Thm:SF}
For any given INL-Sahlqvist formula $\phi\to\psi$, there is a two-sorted first-order local correspondent $\alpha(x)$ such that for any neighbourhood frame $\mathbb{F}=(W,N)$, any $w\in W$, 
$$\mathbb{F},w\Vdash \phi\to\psi \mbox{ iff } \mathbb{F}^{2}\vDash\alpha(x)[w].$$
\end{theorem}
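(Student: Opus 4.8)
The plan is to reduce the full INL-Sahlqvist case to the simple INL-Sahlqvist case already settled in Theorem \ref{Thm:SSF}, treating separately the two new features of the grammar: the disjunctions $\phi\lor\phi$ and the negative constituents $\gamma$ (both as a base clause and in the last coordinate, as in $\Box_{n}(\phi_1,\ldots,\phi_n;\gamma)$). First I would eliminate the disjunctions. Using the distribution of $\land$ over $\lor$ together with the complete additivity of $\Delta_{n,\theta}$ and of $\Box_{n}$ in its first $n$ coordinates (the semantic properties recorded on page \pageref{Page:preserv:join}), every INL-Sahlqvist antecedent $\phi$ is semantically equivalent to a disjunction $\bigvee_{j}\phi_{j}$ in which each $\phi_{j}$ is generated by the same grammar but without the clause $\phi\lor\phi$; crucially, disjunctions are never pushed into the opaque blocks $\zeta$ or $\gamma$ occupying the last coordinate, because the grammar allows only $\zeta$ or $\gamma$ (never an arbitrary $\phi$) there. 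Then, by the equivalence $(\bigvee_{j}\phi_{j}\to\psi)\leftrightarrow\bigwedge_{j}(\phi_{j}\to\psi)$, the first-order correspondent of $\phi\to\psi$ is the conjunction of the correspondents of the $\phi_{j}\to\psi$, so it suffices to handle a disjunction-free antecedent.

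Second, for such a disjunction-free antecedent I would run exactly the prenex manipulation of the proof of Theorem \ref{Thm:SSF}, pulling all existential quantifiers $\exists\overline{X}\exists\overline{y}$ to the front, so that $ST_{x}(\phi)$ takes the shape $\exists\overline{X}\exists\overline{y}(\mbox{REL}\land\mbox{AT}\land\mbox{NEG})$. Here $\mbox{REL}$ is the pure relational block (the $R_{n}$-, $R_{-1,\theta}$- and $R_{\overline{\theta}}$-conjuncts), $\mbox{AT}$ collects the positive atomic contributions coming from the pseudo-boxed atoms $\zeta$ and from bare variables exactly as before, and $\mbox{NEG}$ collects the antitone contributions, namely the translations $ST_{w}(\gamma)$ of the negative base clauses and the blocks $\forall y(R_{\ni}Xy\rightarrow ST_{y}(\gamma))$ produced by $\Box_{n}(\phi_1,\ldots,\phi_n;\gamma)$. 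Each such block is antitone in every $P_{i}$, since $ST_{y}(\gamma)$ is antitone and occurs in positive position under $\forall y(R_{\ni}Xy\rightarrow -)$.

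Third, I would transport the existential prefix across the implication, turning $\forall P_1\ldots\forall P_n\forall x(ST_{x}(\phi)\to ST_{x}(\psi))$ into
$$\forall P_1\ldots\forall P_n\forall\overline{X}\forall x\forall\overline{y}(\mbox{REL}\land\mbox{AT}\land\mbox{NEG}\to\mbox{POS}),$$
and then move $\mbox{NEG}$ back to the consequent to obtain
$$\forall P_1\ldots\forall P_n\forall\overline{X}\forall x\forall\overline{y}(\mbox{REL}\land\mbox{AT}\to(\mbox{NEG}\to\mbox{POS})),$$
where $\mbox{POS}=ST_{x}(\psi)$. Since $\mbox{NEG}$ is antitone and $\mbox{POS}$ is positive, the new consequent $\mbox{NEG}\to\mbox{POS}$ is monotone in every $P_{i}$. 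Hence the minimal-valuation computation is driven entirely by $\mbox{AT}$ and is literally the one carried out in Theorem \ref{Thm:SSF}: for each $p_{i}$ one takes $\sigma(P_{i})$ to be the disjunction of the minimal valuations read off the occurrences of $P_{i}$ in $\mbox{AT}$. Substituting $\sigma(P_1),\ldots,\sigma(P_k)$ into the monotone consequent removes all unary predicate symbols, and the resulting two-sorted first-order formula is the desired local correspondent $\alpha(x)$.

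The main obstacle is the bookkeeping of the negative parts. One must verify that after the prenex manipulation all antitone contributions, in particular the last-coordinate blocks $\forall y(R_{\ni}Xy\rightarrow ST_{y}(\gamma))$ which carry the bound variable $y$ and the free subset variable $X$, can be gathered uniformly into $\mbox{NEG}$ under the outer universal prefix, and that moving $\mbox{NEG}$ to the consequent genuinely restores monotonicity so that the simple-case minimal valuation still yields the correct correspondent. This is precisely the point at which one repeats, mutatis mutandis, the argument of \cite[Theorem 3.54]{BRV01}: monotonicity of $\mbox{NEG}\to\mbox{POS}$ guarantees that the universally quantified second-order statement holds if and only if it holds at the minimal valuation making $\mbox{AT}$ true, and at that valuation every $\sigma(P_{i})$ is first-order definable, so the substitution produces a genuine first-order correspondent.
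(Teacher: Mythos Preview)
Your proposal is correct and follows essentially the same route as the paper: pull the existential quantifiers to the front, split off the disjunctions, move the negative conjuncts $\mbox{NEG}$ across the implication so that the consequent becomes positive, and then invoke the minimal-valuation computation of Theorem~\ref{Thm:SSF}. The only cosmetic difference is that you eliminate disjunctions at the modal level (using additivity of $\Box_n$ in its first $n$ coordinates) before translating, whereas the paper performs the same distribution directly on the standard translations via the first-order equivalences $\exists y(\alpha\lor\beta)\leftrightarrow\exists y\alpha\lor\exists y\beta$ and $(\alpha\lor\beta)\land\gamma\leftrightarrow(\alpha\land\gamma)\lor(\beta\land\gamma)$; the resulting decomposition and the subsequent argument are identical.
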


\begin{proof}
We use similar proof strategy as \cite[Theorem 3.54]{BRV01}. The part that we needs to take care of is the way to compute the minimal valuation. Now for each INL-Sahlqvist antecedent $\phi$, we consider the Backus-Naur form of $\beta=ST_{x}(\phi)$:

$$\beta::=ST_{x}(\zeta) \mid ST_{x}(\gamma) \mid x\neq x\mid x=x\mid \beta\land\beta \mid \beta\lor\beta \mid $$
$$\exists X\exists y_{1}\ldots\exists y_{n}(R_{N}xX\land R_{\ni}Xy_{1} \land\ldots\land R_{\ni}Xy_{n}\land \forall y(R_{\ni}Xy\rightarrow \alpha_{\theta}(y))\land ST_{y_{1}}(\phi_{1})\land\ldots\land ST_{y_{n}}(\phi_{n})) \mid $$
$$\exists X\exists y_{1}\ldots\exists y_{n}(R_{N}xX\land R_{\ni}Xy_{1} \land\ldots\land R_{\ni}Xy_{n}\land \forall y(R_{\ni}Xy\rightarrow ST_{y}(\zeta))\land ST_{y_{1}}(\phi_{1})\land\ldots\land ST_{y_{n}}(\phi_{n}))\mid$$
$$\exists X\exists y_{1}\ldots\exists y_{n}(R_{N}xX\land R_{\ni}Xy_{1} \land\ldots\land R_{\ni}Xy_{n}\land \forall y(R_{\ni}Xy\rightarrow ST_{y}(\gamma))\land ST_{y_{1}}(\phi_{1})\land\ldots\land ST_{y_{n}}(\phi_{n}))$$

where $\theta$ is a pure formula without propositional variables, $\zeta$ is a pseudo-boxed atom defined on page \pageref{page:pseu:boxed:atom} and $\gamma$ is a negative formula defined on page \pageref{page:Pos:Neg}. 

By denoting $R_{N}xX\land R_{\ni}Xy_{1} \land\ldots\land R_{\ni}Xy_{n}$ as $R_{n}Xxy_1,\dots y_n$, $\forall y(R_{\ni}Xy\rightarrow \alpha_{\theta}(y	))$ as $R_{-1,\theta}X$, we can rewrite the Backus-Naur form of $\beta=ST_{x}(\phi)$ as follows:

$$\beta::=ST_{x}(\zeta) \mid ST_{x}(\gamma) \mid x\neq x\mid x=x\mid \beta\land\beta \mid \beta\lor\beta \mid $$
$$\exists X\exists y_{1}\ldots\exists y_{n}(R_{n}Xxy_1,\dots y_n\land R_{-1,\theta}X\land ST_{y_{1}}(\phi_{1})\land\ldots\land ST_{y_{n}}(\phi_{n})) \mid $$
$$\exists X\exists y_{1}\ldots\exists y_{n}(R_{n}Xxy_1,\dots y_n\land \forall y(R_{\ni}Xy\rightarrow ST_{y}(\zeta))\land ST_{y_{1}}(\phi_{1})\land\ldots\land ST_{y_{n}}(\phi_{n}))\mid$$
$$\exists X\exists y_{1}\ldots\exists y_{n}(R_{n}Xxy_1,\dots y_n\land \forall y(R_{\ni}Xy\rightarrow ST_{y}(\gamma))\land ST_{y_{1}}(\phi_{1})\land\ldots\land ST_{y_{n}}(\phi_{n}))$$

where $\theta$ is a pure formula without propositional variables, $\zeta$ is a pseudo-boxed atom defined on page \pageref{page:pseu:boxed:atom} and $\gamma$ is a negative formula defined on page \pageref{page:Pos:Neg}. 

Using the equivalence $\exists y\delta(y)\land\gamma\leftrightarrow\exists y(\delta(y)\land\gamma)$ (where $y$ does not occur in $\gamma$), $\exists y(\alpha\lor\beta)\leftrightarrow\exists y\alpha\lor\exists y\beta$, $(\alpha\lor\beta)\land\gamma\leftrightarrow(\alpha\land\gamma)\lor(\beta\land\gamma)$, it is easy to see that the first-order formula $\beta=ST^{E}_{x}(\phi)$ is equivalent to a formula of the form 
$\bigvee_{i}\exists\overline{X}_{i}\exists\overline{y}_{i}(\mbox{REL}_{i}^{\overline{X}_{i},x,\overline{y}_{i}}\land\mbox{PS-BOXED-AT}_{i}\land\mbox{NEG}_{i})$, where:

\begin{itemize}
\item $\mbox{REL}_{i}^{\overline{X}_{i},x,\overline{y}_{i}}$ is a (possibly empty) conjunction of formulas of the form $R_{n}Xxy_1,\dots y_n$ and $R_{-1,\theta}X$;
\item $\mbox{PS-BOXED-AT}_{i}$ is a conjunction of formulas of the form $ST_{y}(\zeta)$ and $\forall y(R_{\ni}Xy\rightarrow ST_{y}(\zeta))$ where $\zeta$ is a pseudo-boxed atom;
\item $\mbox{NEG}_{i}$ is a conjunction of formulas of the form $ST_{y}(\gamma)$ and $\forall y(R_{\ni}Xy\rightarrow ST_{y}(\gamma))$ where $\gamma$ is a negative formula.
\end{itemize}

Now let us consider the standard translation of INL-Sahlqvist formula $\phi\to\psi$ where $\phi$ is an INL-Sahlqvist antecedent and $\psi$ is a positive formula. For $\beta=ST^{E}_{x}(\phi\to\psi)$, we have the following equivalence:
\begin{center}
\begin{tabular}{c l}
& $\bigvee_{i}\exists\overline{X}_{i}\exists\overline{y}_{i}(\mbox{REL}_{i}^{\overline{X}_{i},x,\overline{y}_{i}}\land\mbox{PS-BOXED-AT}_{i}\land\mbox{NEG}_{i})\to ST_{x}(\psi)$\\
$\Leftrightarrow$\ & $\bigwedge_{i}(\exists\overline{X}_{i}\exists\overline{y}_{i}(\mbox{REL}_{i}^{\overline{X}_{i},x,\overline{y}_{i}}\land\mbox{PS-BOXED-AT}_{i}\land\mbox{NEG}_{i})\to ST_{x}(\psi))$\\
$\Leftrightarrow$\ & $\bigwedge_{i}\forall\overline{X}_{i}\forall\overline{y}_{i}(\mbox{REL}_{i}^{\overline{X}_{i},x,\overline{y}_{i}}\land\mbox{PS-BOXED-AT}_{i}\land\mbox{NEG}_{i}\to ST_{x}(\psi))$\\
$\Leftrightarrow$\ & $\bigwedge_{i}\forall\overline{X}_{i}\forall\overline{y}_{i}(\mbox{REL}_{i}^{\overline{X}_{i},x,\overline{y}_{i}}\land\mbox{PS-BOXED-AT}_{i}\to \neg\mbox{NEG}_{i}\lor ST_{x}(\psi))$\\
\end{tabular}
\end{center}
Now it is easy to see that $\neg\mbox{NEG}_{i}\lor ST_{x}(\psi)$ is equivalent to a first-order formula which is positive in all unary predicates. We can now use essentially the same proof strategy as Theorem \ref{Thm:SSF}.
\end{proof}

As we can see from the proofs above, the key point is the quantifier pattern of the two-sorted standard translation of the modalities, i.e.\ the outer part of the structure of an INL-Sahlqvist antecedent are translated into existential quantifiers, and the inner part are translated into universal quantifiers.

\section{Bimodal translation of instantial neighbourhood logic}\label{Sec:Trans2}

In the present section we give the second proof of Sahlqvist correspondence theorem, by using a bimodal translation into a normal bimodal language. The methodology is similar to \cite{Ha03}, but with slight differences.

\subsection{Normal bimodal language and two-sorted Kripke frame}\label{Subsec:Bimodal}

As we can see in Section \ref{Sec:Trans1}, for any given neighbourhood frame $\mathbb{F}=(W,N)$, there is an associated two-sorted Kripke frame $\mathbb{F}^{2}=(W, \mathcal{P}(W), R^{\ni}, R^{N})$, where 

\begin{enumerate}
\item $R^{\ni}\subseteq \mathcal{P}(W)\times W$ such that for any $x\in W$ and $X\in\mathcal{P}(W)$, $R^{\ni}Xx$ iff $x\in X$;
\item $R^{N}\subseteq W\times\mathcal{P}(W)$ such that for any $x\in W$ and $X\in\mathcal{P}(W)$, $R^{N}xX$ iff $X\in N(x)$.
\end{enumerate}

In this kind of semantic structures, we can define the following two-sorted normal bimodal language:

$$\phi::=p \mid \bot \mid \top \mid \neg\phi \mid \phi\land\phi \mid \phi\lor\phi \mid \Diamond_{N}\theta$$
$$\theta::=\Diamond_{\ni}\phi \mid \neg\theta \mid \theta\land\theta \mid \theta\lor\theta $$
where $\phi$ is a formula of the \emph{world type} and will be interpreted in the first domain, and $\theta$ is a formula of the \emph{subset type} and will be interpreted in the second domain. We can also define $\Box_{\ni}$ and $\Box_{N}$ in the standard way.

Given a two-sorted Kripke frame $\mathbb{F}^{2}=(W, \mathcal{P}(W), R^{\ni}, R^{N})$, a valuation $V$ is defined as a map $V:\mathsf{Prop}\rightarrow\mathcal{P}(W)$, where propositional variables are interpreted as subsets of the first domain. The satisfaction relation $\Vdash$ is defined as follows, for any $w\in W$ and any $X$ in $\mathcal{P}(W)$ (here we omit the Boolean connectives):

\begin{itemize}
\item $\mathbb{F}^{2},V,w\Vdash p$ iff $w\in V(p)$;
\item $\mathbb{F}^{2},V,w\Vdash \Diamond_{N}\theta$ iff there is an $X\in\mathcal{P}(W)$ such that $R^{N}wX$ and $\mathbb{F}^{2},V,X\Vdash\theta$;
\item $\mathbb{F}^{2},V,X\Vdash \Diamond_{\ni}\phi$ iff there is a $w\in W$ such that $R^{\ni}Xw$ and $\mathbb{F}^{2},V,w\Vdash\phi$.
\end{itemize}

\subsection{Bimodal translation}\label{Subsec:Bimodal:translation}
Now we are ready to define the translation $\tau$ from the INL language to the two-sorted normal bimodal language:
\begin{definition}(Bimodal translation)
Given any INL formula $\phi$, the bimodal translation $\tau(\phi)$ is defined as follows:
\begin{itemize}
\item $\tau(p)=p$;
\item $\tau(\bot)=\bot$;
\item $\tau(\top)=\top$;
\item $\tau(\neg\phi)=\neg\tau(\phi)$;
\item $\tau(\phi_1\land\phi_{2})=\tau(\phi_1)\land\tau(\phi_{2})$;
\item $\tau(\phi_1\lor\phi_{2})=\tau(\phi_1)\lor\tau(\phi_{2})$;
\item $\tau(\phi_1\to\phi_{2})=\tau(\phi_1)\to\tau(\phi_{2})$;
\item $\tau(\Box_{n}(\phi_1, \ldots, \phi_n; \phi))=\Diamond_{N}(\Diamond_{\ni}\tau(\phi_1)\land\ldots\land\Diamond_{\ni}\tau(\phi_n)\land \Box_{\ni}\tau(\phi))$.
\end{itemize}
\end{definition}

It is easy to see the following correctness result:

\begin{theorem}

For any neighbourhood frame $\mathbb{F}=(W,N)$, any valuation $V$ on $\mathbb{F}$, any $w\in W$, any INL formula $\phi$,
$$(\mathbb{F},V,w)\Vdash\phi\mbox{ iff }\mathbb{F}^{2},V, w\Vdash \tau(\phi).$$

\end{theorem}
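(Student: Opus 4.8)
The plan is to prove the biconditional by induction on the structure of the INL formula $\phi$, simultaneously for all worlds $w\in W$, keeping $\mathbb{F}$ and $V$ fixed throughout. The whole argument rests on the two defining correspondences between $\mathbb{F}$ and its associated two-sorted Kripke frame $\mathbb{F}^{2}$, namely $R^{N}wX$ iff $X\in N(w)$, and $R^{\ni}Xs$ iff $s\in X$.

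First I would dispatch the base and Boolean cases. For $p$ both sides reduce to $w\in V(p)$ since $\tau(p)=p$; for $\bot$ and $\top$ the claim is immediate. For $\neg$, $\land$, $\lor$, the translation $\tau$ commutes with the connective by definition, and the bimodal satisfaction relation interprets the Boolean connectives exactly as the INL satisfaction relation does, so the inductive hypothesis applied to the immediate subformulas closes these cases.

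The substantive case is the modality $\Box_{n}(\phi_1,\ldots,\phi_n;\phi)$. Here I would unwind the right-hand side $\mathbb{F}^{2},V,w\Vdash\Diamond_{N}(\Diamond_{\ni}\tau(\phi_1)\land\ldots\land\Diamond_{\ni}\tau(\phi_n)\land\Box_{\ni}\tau(\phi))$ step by step using the satisfaction clauses. By the clause for $\Diamond_{N}$, this holds iff there is some $X$ with $R^{N}wX$, equivalently $X\in N(w)$, at which the subset-type conjunction holds. That conjunction holds at $X$ iff each conjunct does: each $\Diamond_{\ni}\tau(\phi_i)$ holds at $X$ iff there is $s_i$ with $R^{\ni}Xs_i$, i.e.\ $s_i\in X$, and $\mathbb{F}^{2},V,s_i\Vdash\tau(\phi_i)$; and $\Box_{\ni}\tau(\phi)$ holds at $X$ iff for every $s$ with $R^{\ni}Xs$, i.e.\ every $s\in X$, we have $\mathbb{F}^{2},V,s\Vdash\tau(\phi)$. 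Applying the inductive hypothesis to the proper subformulas $\phi_1,\ldots,\phi_n,\phi$ replaces each $\mathbb{F}^{2},V,u\Vdash\tau(\cdot)$ by $\mathbb{F},V,u\Vdash\cdot$. What remains is precisely the assertion that there exists $S=X\in N(w)$ with $\mathbb{F},V,s\Vdash\phi$ for all $s\in S$ and, for each $i$, some $s_i\in S$ satisfying $\phi_i$, which is exactly the definition of $\mathbb{F},V,w\Vdash\Box_{n}(\phi_1,\ldots,\phi_n;\phi)$.

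There is no genuine obstacle here: the proof is a routine structural induction. The only point demanding care is bookkeeping across the two sorts, ensuring that the two membership correspondences are applied in the correct direction, so that the existential $\Diamond_{\ni}$ conjuncts deliver the instantial witnesses $s_i\in S$ while the single $\Box_{\ni}$ conjunct delivers the universal condition on all of $S$. The $\exists\forall$ quantifier shape of the bimodal translation matches the $\exists S$ (with nested $\forall s$ and $\exists s_i$) shape of the INL clause exactly, so the induction goes through without complication.
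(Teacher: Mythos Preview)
Your proposal is correct and is exactly the routine structural induction the paper has in mind; the paper itself does not spell out a proof, merely stating that the result is easy to see. The only nontrivial case is the $\Box_n$ clause, and your unwinding of $\Diamond_N(\Diamond_\ni\tau(\phi_1)\land\cdots\land\Diamond_\ni\tau(\phi_n)\land\Box_\ni\tau(\phi))$ via the defining correspondences $R^N wX\Leftrightarrow X\in N(w)$ and $R^\ni Xs\Leftrightarrow s\in X$ is precisely what is needed.
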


\subsection{Sahlqvist correspondence theorem via bimodal translation}\label{Subsec:Proof2}

Similar to the normal modal logic case, we can define the Sahlqvist antecedents in the normal bimodal logic built up by boxed atoms and negative formulas in the inner part generated by $\land$, $\lor$, $\Diamond_{\ni}$, $\Diamond_{N}$, where the formulas are of the right type. Now we can prove Sahlqvist correspondence theorem by using bimodal translation:

\begin{theorem}\label{Thm:Sahl:bimodal}
For any INL formula $\phi\to\psi$, if $\phi$ is an INL-Sahlqvist antecedent and $\psi$ is a positive INL formula, then $\tau(\phi\to\psi)$ is a Sahlqvist formula in the normal bimodal language.
\end{theorem}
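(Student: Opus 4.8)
The plan is to prove the statement by a simultaneous induction on the structure of INL formulas, showing that $\tau$ respects each syntactic ingredient from which an INL-Sahlqvist formula is assembled. Concretely, I would establish in one induction: (i) if $\psi$ is positive then $\tau(\psi)$ is positive, and if $\gamma$ is negative then $\tau(\gamma)$ is negative; (ii) if $\theta$ is pure then $\tau(\theta)$ contains no propositional variable, hence is a closed bimodal formula, simultaneously positive and negative; (iii) if $\zeta$ is a pseudo-boxed atom then $\tau(\zeta)$ is a boxed atom of the bimodal language (in the extended sense explained below); and (iv) if $\phi$ is an INL-Sahlqvist antecedent then $\tau(\phi)$ is a bimodal Sahlqvist antecedent. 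Granting these, the theorem is immediate: since $\tau(\phi\to\psi)=\tau(\phi)\to\tau(\psi)$, claim (iv) supplies a Sahlqvist antecedent on the left and claim (i) a positive formula on the right. I would also remark that $\tau$ produces well-typed formulas automatically, because $\Diamond_N$ always lands on a subset-type formula while $\Diamond_\ni,\Box_\ni$ send world-type to subset-type, so typing requires no separate bookkeeping.

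Claims (i) and (ii) are routine, since $\tau$ commutes with the Booleans and negation and its only modal clause is built from the monotone operators $\Diamond_N,\Diamond_\ni,\Box_\ni$, which preserve polarity and purity. The three modal productions of an INL-Sahlqvist antecedent other than the $\nabla$ inside a pseudo-boxed atom are then dispatched directly. For $\Delta_{n,\theta}(\phi_1,\dots,\phi_n)$ one has $\tau=\Diamond_N(\Diamond_\ni\tau(\phi_1)\land\dots\land\Diamond_\ni\tau(\phi_n)\land\Box_\ni\tau(\theta))$: each $\Diamond_\ni\tau(\phi_i)$ is an existential operator applied to a Sahlqvist antecedent (by the inductive hypothesis for (iv)), and $\Box_\ni\tau(\theta)$ is closed by (ii), hence a legitimate negative conjunct, so the outer existential $\Diamond_N$ yields a Sahlqvist antecedent. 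For $\Box_n(\phi_1,\dots,\phi_n;\zeta)$ the extra conjunct $\Box_\ni\tau(\zeta)$ is a box prefixed to a boxed atom, hence a boxed atom; for $\Box_n(\phi_1,\dots,\phi_n;\gamma)$ it is $\Box_\ni\tau(\gamma)$, a box applied to a negative formula, hence negative. Together with the closure of bimodal Sahlqvist antecedents under $\land,\lor,\Diamond_\ni,\Diamond_N$, these settle every case of (iv) except the one feeding into (iii).

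The main obstacle is the $\nabla$-clause of (iii), namely $\nabla_{1,\theta}(\zeta)=\neg\Box_1(\neg\zeta;\theta)$. Unfolding $\tau$ and driving the negation inward gives $\tau(\nabla_{1,\theta}(\zeta))=\Box_N(\Box_\ni\tau(\zeta)\lor\neg\Box_\ni\tau(\theta))$: a universal operator $\Box_N$ applied to a \emph{disjunction} of the boxed atom $\Box_\ni\tau(\zeta)$ with the closed formula $\neg\Box_\ni\tau(\theta)$. This is not literally of the form $\Box_{i_1}\dots\Box_{i_k}p$, so the delicate point is to argue that it still functions as a boxed atom, and this is where the strict boxed-atom notion must be reconciled with the translation's output. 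The intended resolution exploits purity: by (ii) the disjunct $\neg\Box_\ni\tau(\theta)$ has no propositional variable, so it denotes a fixed, valuation-independent subset of $\mathcal{P}(W)$ determined by the frame alone. I would therefore read $\Box_N(\Box_\ni\tau(\zeta)\lor\neg\Box_\ni\tau(\theta))$ as $\Box_{N,\theta}\Box_\ni\tau(\zeta)$, where $\Box_{N,\theta}$ is the box of the derived relation $\{(w,X):R^N wX\text{ and }\Box_\ni\tau(\theta)\text{ holds at }X\}$; since this relation is definable without propositional variables it is inert under the minimal-valuation computation, so the resulting formula is, for correspondence purposes, a genuine boxed atom. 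Equivalently, one invokes the standard robustness of Sahlqvist-hood under closed formulas. With (iii) secured, the nested-$\nabla$ case of pseudo-boxed atoms closes under the induction and the theorem follows. The step I expect to demand the most care is exactly this absorption of the closed formula $\Box_\ni\tau(\theta)$ underneath $\Box_N$.
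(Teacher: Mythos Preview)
Your proposal is correct and follows essentially the same route as the paper: unfold $\tau$ along the Backus--Naur form of INL-Sahlqvist antecedents, isolate the $\nabla_{1,\theta}$ clause as the only nontrivial case, rewrite it as $\Box_N(\Box_\ni\tau(\zeta)\lor\neg\Box_\ni\tau(\theta))$, and use purity of $\theta$ to absorb the closed disjunct so that the result behaves as a (conjunction of) boxed atom(s). Your derived-box reading $\Box_{N,\theta}$ makes explicit what the paper leaves as the informal remark that $\tau(\zeta)$ ``can be treated as a conjunction of boxed atoms'' since pure formulas are simultaneously positive and negative; note only that, because pseudo-boxed atoms are closed under conjunction, claim (iii) should target conjunctions of boxed atoms rather than single boxed atoms.
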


\begin{proof}
As we know, the Backus-Naur form of an INL-Sahlqvist antecedent is given as follows:
$$\zeta::=p \mid \bot \mid \top \mid \zeta\land\zeta \mid \nabla_{1,\theta}(\zeta)$$
$$\phi::=\zeta\mid \gamma\mid\bot\mid \top\mid \phi\land\phi \mid\phi\lor\phi \mid \Delta_{n,\theta}(\phi_1, \ldots, \phi_n)\mid \Box_{n}(\phi_1, \ldots, \phi_n;\zeta)\mid \Box_{n}(\phi_1, \ldots, \phi_n;\gamma),$$

where $\theta$ is a pure INL formula without propositional variables, $\zeta$ is a pseudo-boxed atom, and $\gamma$ is a negative formula. Therefore, the bimodal translations of $\tau(\zeta)$ and $\tau(\phi)$ have the following Backus-Naur form:

$$\tau(\zeta)::=p \mid \bot \mid \top \mid \tau(\zeta)\land\tau(\zeta) \mid\neg\Diamond_{N}(\Diamond_{\ni}\neg\tau(\zeta)\land \Box_{\ni}\tau(\theta))$$

$$\tau(\phi)::=\tau(\zeta)\mid \tau(\gamma)\mid\bot\mid \top\mid \phi\land\phi \mid\phi\lor\phi \mid $$
$$\Diamond_{N}(\Diamond_{\ni}\tau(\phi_1)\land\ldots\land\Diamond_{\ni}\tau(\phi_n)\land \Box_{\ni}\tau(\theta)) \mid$$
$$\Diamond_{N}(\Diamond_{\ni}\tau(\phi_1)\land\ldots\land\Diamond_{\ni}\tau(\phi_n)\land \Box_{\ni}\tau(\zeta)) \mid$$
$$\Diamond_{N}(\Diamond_{\ni}\tau(\phi_1)\land\ldots\land\Diamond_{\ni}\tau(\phi_n)\land \Box_{\ni}\tau(\gamma))$$

Now we analyze the shape of the Backus-Naur form above. For the bimodal translation of a pseudo-boxed atom $\zeta$ in the INL language, $\neg\Diamond_{N}(\Diamond_{\ni}\neg\tau(\zeta)\land \Box_{\ni}\tau(\theta))$ is equivalent to $\Box_{N}(\Box_{\ni}\tau(\zeta)\lor\neg \Box_{\ni}\tau(\theta))$. since $\theta$ is a pure formula without propositional variables, $\tau(\zeta)$ can be treated as a conjunction of boxed atoms in the bimodal language.

Now we examine $\tau(\phi)$. The Backus-Naur form of $\tau(\phi)$ is built up by $\tau(\zeta)$ (a conjunction of boxed atoms) and $\tau(\gamma)$ (a negative formula), generated by $\land,\lor$ and the three special shapes of $\tau(\Box_n(\phi_1, \ldots, \phi_n;\phi))$ where $\phi$ are pure formulas without propositional variables (the $\theta$ case), pseudo-boxed atoms (the $\zeta$ case) or negative formulas (the $\gamma$ case). It is easy to see that $\tau(\phi)$ is built up by pure formulas\footnote{Indeed, pure formulas are both negative and positive formulas in every propositional variable $p$, since their values are constants and $p$ does not occur in them.}, boxed atoms and negative formulas in the bimodal language, generated by $\Diamond_{\ni}, \Diamond_{N}, \land, \lor$, thus of the shape of Sahlqvist antecedent in the bimodal language. Therefore, $\tau(\phi\to\psi)$ is a Sahlqvist formula in the normal bimodal language.
\end{proof}

\section{Discussions and further directions}\label{Sec:Concl}

In this paper, we give two different proofs of Sahlqvist correspondence theorem, the first one by standard translation and minimal valuation, and the second one by reduction using the bimodal translation into a normal bimodal language. We give some remarks and further directions here.

\paragraph{Algebraic correspondence method using the algorithm ALBA.}
In \cite{CoGhPa14}, Sahlqvist and inductive formulas (an extension of Sahlqvist formulas, see \cite{GorankoV06} for further details) are defined based on duality-theoretic and order-algebraic insights. The Ackermann lemma based algorithm ALBA is given, which effectively computes first-order correspondents of input formulas/inequalities, and succeed on the Sahlqvist and inductive formulas/inequalities. In this approach, Sahlqvist and inductive formulas are defined in terms of the order-theoretic properties of the algebraic interpretations of the logical connectives. Indeed, in the dual complex algebra $\mathbb{A}$ of Kripke frame, the good properties of the connectives are the following:

\begin{itemize}
\item Unary $\Diamond$ is interpreted as a map $\Diamond^{\mathbb{A}}:\mathbb{A}\to\mathbb{A}$, which preserves arbitrary joins, i.e.\ $\Diamond^{\mathbb{A}}(\bigvee a)=\bigvee \Diamond^{\mathbb{A}}a$ and $\Diamond^{\mathbb{A}}\bot=\bot$. Similarly, $n$-ary diamonds are interpreted as maps which preserve arbitrary joins in every coordinate.
\item Unary $\Box$ is interpreted as a map $\Box^{\mathbb{A}}:\mathbb{A}\to\mathbb{A}$, which preserves arbitrary meets, i.e.\ $\Box^{\mathbb{A}}(\bigwedge a)=\bigwedge \Box^{\mathbb{A}}a$ and $\Box^{\mathbb{A}}\top=\top$. Preserving arbitrary meets guarantees the map $\Box^{\mathbb{A}}:\mathbb{A}\to\mathbb{A}$ to have a left adjoint $\Diamondblack^{\mathbb{A}}:\mathbb{A}\to\mathbb{A}$ such that $\Diamondblack^{\mathbb{A}}a\leq b\mbox{ iff }a\leq\Box^{\mathbb{A}} b$.
\end{itemize}

As we have seen from page \pageref{Page:preserv:join}, the algebraic interpretation of $\Box_{n}(\phi_{1}, \ldots, \phi_{n};\phi)$ preserves arbitrary joins in the first $n$ coordinates, and is monotone in the last coordinate. Therefore, we can adapt the ALBA method to the instantial neighbourhood logic case. In addition to this, we can also define INL-inductive formulas based on the algebraic properties of the instantial neighbourhood connectives, to extend INL-Sahlqvist formulas to INL-inductive formulas as well as to the language of instantial neighbourhood logic with fixpoint operators.

\paragraph{Completeness and canonicity.} Other issues that we do not study in the present paper include completeness of logics axiomatized by INL-Sahlqvist formulas and canonicity. For the proof of completeness, we need to establish the validity of INL-Sahlqvist formulas on their corresponding canonical frames, where canonicity and persistence might play a role (see \cite[Chapter 5]{BRV01}).

\paragraph{Acknowledgement} The author was supported in part by the Taishan Young Scholars Program of the Government of Shandong Province, China (No.tsqn201909151).

\bibliographystyle{abbrv}
\bibliography{INL}
\end{document}